\newtheorem{thm}{Theorem}[section]
\newtheorem{theorem}{Theorem}[section]
\newtheorem{remark}[thm]{Remark}
\newtheorem{lemma}[thm]{Lemma}
\newtheorem{corollary}[thm]{Corollary}
  \def\R{{\mathbb R}}
  \def\RR{{\mathbb R}}
\newcommand{\eps}{\varepsilon}
\title[fractional harmonic flow]{Weak solutions of geometric flows associated to integro-differential harmonic maps}
\author[A. Schikorra]{Armin Schikorra}
\address{University of Basel}
\email{armin.schikorra@unibas.ch}
\author[Y. Sire]{Yannick Sire} 
\address{Johns Hopkins University, Krieger Hall, Baltimore, USA}
\email{sire@math.jhu.edu}
\author[C. Wang]{Changyou Wang} 
\address{Purdue University, USA}
\email{wang2482@purdue.edu}
\begin{document}
\begin{abstract}
The purpose of this note is to prove the existence of global weak solutions to the flow associated to integro-differential harmonic maps into spheres and Riemannian homogeneous manifolds.   

\end{abstract}
\maketitle
\tableofcontents
\section{Introduction}

Let $\mathcal N$ be a compact smooth connected manifold without boundary, isometrically embedded into $\mathbb R^L$,
and $\Omega$ be a smooth bounded domain of  $\RR^n$ with $n \geq 1$. Consider the Dirichlet energy functional
$$
E_2(u)=\int_\Omega |\nabla u|^2\,dx,
$$ 
where $u$ maps $\Omega$ into $\mathcal N$. The harmonic map flow is then the flow defined by: given $u_0$ an initial data in a suitable functional space, consider  
\begin{equation}
 \begin{cases}
  \partial_t u +  E_2'(u,\cdot) \perp T_u \mathcal{N} \quad &\mbox{in $\Omega \times (0,\infty)$,}\\
   u(\cdot,0) = u_0 \quad &\mbox{in $\Omega$,}\\	
   u(\cdot,t) = u_1 \quad &\mbox{on $\partial \Omega \times (0,\infty)$.}
 \end{cases}
\end{equation}
In  the previous expression, $E_2'$ denotes the Frechet derivative of the functional $E_2$. This flow has been studied in many papers (see e.g. \cite{chen,chenStruwe,struweJDG} and the monograph \cite{bookLW}). 

In the present paper, we investigate the flow associated to the critical points of the following type of nonlocal energy:
\begin{equation}\label{energy}
E_{s,p}(u)=\frac1p\int_{\Omega\times \Omega} \frac{|u(x)-u(y)|^p}{|x-y|^{n+sp}}\,dx\,dy \ \big(\equiv \big[u\big]_{\dot{W}^{s,p}(\Omega)}^p\big),
\end{equation}
where $1<p<+\infty$, $s\in (0,1)$,  $\Omega$ is a Lipschitz domain in $\RR^n$ (possibly the whole $\RR^n$), $n \geq 1$,  
and $u$ takes values into $\mathcal N\subset\mathbb R^L$. We define the Sobolev space
$$W^{s,p}(\Omega,\mathcal N)=\Big\{u\in W^{s,p}(\Omega, \mathbb R^L): \ u(x)\in \mathcal N
\ {\rm{for\ a.e.\ }}x\in\Omega\Big\}.$$
The quantity $E_{s,p}(u)$ is known as the Gagliardo norm of $u$ and has been investigated in the context of harmonic maps (i.e. with the side-condition that $u$ maps into $\mathcal{N}$) by several authors. 
In the case, $n=1$, $s=\frac12$ and $p=2$, the regularity of such maps has been obtained in \cite{LR1,LR2}. In the case of general $n\geq 2$, $s = \frac{n}{2}$ it has been investigated in \cite{ArminSphereHarmMaps,DLmanifolds}. In the case, $p=\frac{n}{s}$ for any $n\geq 1$ and $s \in (0,1)$, this has been investigated by one of the authors \cite{armin} in the case of $\mathcal N=\mathbb S^{L-1}$. In all the previous results, the exponent $p$ is the conformal exponent $p = \frac{n}{s}$, and in this case one expects regularity to hold everywhere.  
If $p < \frac{n}{s}$ one expects only partial regularity and no results are available in the literature in this case.  
In the present paper, we introduce and investigate the flow associated to these maps in all dimensions and for any $p$ and $s$ in the above ranges.

\begin{remark}
In the paper \cite{millotSire}, a Ginzburg-Landau approximation of $\frac12$-harmonic maps is performed. Furthermore, it appears that $\frac12$-harmonic maps can be formulated as harmonic maps with free boundaries {\rm{(}}see \cite{hardtLin}{\rm{)}}. This approach is a new feature of fractional harmonic maps, that we will not exploit here. 
\end{remark} 

We now recall the flow associated with $E_{s,p}(\cdot)$ for maps from $\Omega$ to $\mathcal N$: given $u_0:\Omega\to\mathcal N$ an initial data in a suitable functional space, consider  $u:\Omega\times [0,\infty)\to\mathcal N$ satisfying
\begin{equation}\label{eq:harmonicmapflow}
 \begin{cases}
  \partial_t u +  E_{s,p}'(u,\cdot) \perp T_u \mathcal{N} \quad &\mbox{in $\Omega \times (0,\infty)$,}\\
   u(\cdot,0) = u_0 \quad &\mbox{in $\Omega$,}\\	
   u(\cdot,t) = u_1 \quad &\mbox{on $\partial \Omega \times (0,\infty)$.}
 \end{cases}
\end{equation}

Here, $E_{s,p}'$ denotes the Frechet derivative of $E_{s,p}$ in the function space $W^{s,p}(\Omega,\R^L)$: for $v\in W^{s,p}(\Omega, \mathcal N)$ and $\varphi\in W^{s,p}_0(\Omega, \mathbb{R}^L)$,
\begin{equation}\label{nonlocaloperator}
E_{s,p}'(v, \varphi)
=\int_{\Omega^2}\frac{|v(x)-v(y)|^{p-2}(v(x)-v(y))(\varphi(x)-\varphi(y))}{|x-y|^{n+sp}}.
\end{equation}
Our main result is the following. It provides the existence of global {\sl weak} solutions to the flow (\ref{eq:harmonicmapflow}).
\begin{theorem}\label{main}
Let $\mathcal{N} = \mathbb{S}^{L-1} \subset \RR^L$ be the unit sphere or $\mathcal N$ be a Riemannian homogeneous manifold that is equivariantly  embedded into   $\mathbb R^L$. For any $u_0\in W^{s,p}(\Omega,\mathcal N)$, there exists $u: \Omega \times (0,\infty) \to \mathcal N$ such that
\begin{equation}\label{eqflow}
 \begin{cases}
  \partial_t u + E_{s,p}'(u,\cdot) \perp T_u \mathcal N \quad &\mbox{in $\Omega \times (0,\infty)$},\\
  u(\cdot,0) = u_0 \quad &\mbox{in $\Omega$},\\
 \end{cases}
\end{equation}
in the following sense: 
\begin{itemize}
\item $u \in L^\infty(0, \infty; W^{s,p}(\Omega))$.
\item $\partial_t u \in L^2(0,\infty; L^2(\Omega))$.
\item $u$ satisfies the first equation in \eqref{eqflow} in $\mathcal D'(\Omega\times (0,\infty))$\footnote{See
(\ref{eq:flow3}) in Section 5.}, and
$$\lim_{t\rightarrow 0^+}\big\|u(\cdot, t)-u_0\big\|_{L^2(\Omega)}=0.$$ 
\end{itemize}
\end{theorem}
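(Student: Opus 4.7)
I would prove Theorem \ref{main} by a Ginzburg--Landau penalization of the target constraint, combined with the gradient-flow structure of $E_{s,p}$. Choose a smooth $F:\RR^L\to[0,\infty)$ with $F^{-1}(0)=\mathcal N$ and $\nabla F(y)\perp T_{\pi(y)}\mathcal N$ in an (equivariant) tubular neighborhood of $\mathcal N$; for the sphere this is simply $F(y)=\tfrac14(|y|^2-1)^2$, and for a Riemannian homogeneous manifold it is built from the squared distance composed with the equivariant nearest-point retraction. For $\eps>0$, consider the unconstrained gradient flow
\begin{equation*}
\partial_t u_\eps+E_{s,p}'(u_\eps,\cdot)+\tfrac{1}{\eps}\nabla F(u_\eps)=0,\qquad u_\eps(\cdot,0)=u_0.
\end{equation*}

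\textbf{Construction and uniform estimates.} I would build $u_\eps$ by De Giorgi's minimizing-movements scheme: discretize time with step $\tau$ and iteratively solve
\begin{equation*}
u^{k+1}_\eps\in\arg\min_{v}\Bigl\{E_{s,p}(v)+\tfrac1\eps\!\int_\Omega F(v)\,dx+\tfrac{1}{2\tau}\|v-u^k_\eps\|_{L^2(\Omega)}^2\Bigr\}.
\end{equation*}
Convexity and coercivity of $E_{s,p}$ on $W^{s,p}(\Omega,\RR^L)$ give existence at each step, and sending $\tau\to0$ produces $u_\eps$ together with the dissipation identity. Since $F(u_0)=0$, this yields
\begin{equation*}
\int_0^T\!\!\int_\Omega|\partial_t u_\eps|^2+E_{s,p}(u_\eps(T))+\tfrac1\eps\!\int_\Omega F(u_\eps(T))\,dx\;\le\;E_{s,p}(u_0),
\end{equation*}
so $u_\eps$ is bounded in $L^\infty(0,T;W^{s,p}(\Omega))$, $\partial_t u_\eps$ in $L^2(\Omega\times(0,T))$, and $\int F(u_\eps)\le C\eps$ pointwise in time.

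\textbf{Passage to the limit.} Aubin--Lions gives $u_\eps\to u$ strongly in $L^2$ and a.e., whence $F(u)\equiv0$ and $u\in\mathcal N$ a.e. The initial condition $u(\cdot,t)\to u_0$ in $L^2$ follows from the $L^2$--$H^1$-in-time control. The delicate step is to identify $E_{s,p}'(u_\eps,\cdot)$ in the limit against \emph{tangent-valued} test functions. Because $\nabla F(u_\eps)$ is normal to $\mathcal N$ near $\mathcal N$, any test function $\varphi(x)\in T_{u(x)}\mathcal N$ annihilates the penalty contribution in the limit. To produce such $\varphi$, I use the wedge product $u\wedge\psi$ for the sphere, and, for a Riemannian homogeneous $\mathcal N=G/H$, the Killing vector fields $K_i(u)$ generated by the equivariant $G$-action, which span $T_u\mathcal N$ pointwise. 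The monotonicity of the $p$-fractional operator $E_{s,p}'$ (coming from convexity of $E_{s,p}$) and the fact that these tangential test functions are bounded and depend nicely on $u$ allow passage to the limit in $E_{s,p}'(u_\eps,\varphi_\eps)$ by a Minty-type argument, delivering the weak formulation of the tangency condition $\partial_t u+E_{s,p}'(u,\cdot)\perp T_u\mathcal N$.

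\textbf{Main obstacle.} The delicate point is not the energy estimate nor the strong $L^2$-compactness, but the identification of the fully nonlinear, nonlocal term $|u_\eps(x)-u_\eps(y)|^{p-2}(u_\eps(x)-u_\eps(y))$ against tangential test functions that themselves depend nonlinearly on $u_\eps$. Without uniform higher regularity, one cannot pass to the limit directly; the combination of the monotonicity of $E_{s,p}'$ with the fact that $u\wedge(\,\cdot\,)$, or the Killing vector construction, commutes in a controllable way with the Gagliardo-type double difference is what allows the argument to close, but the commutator estimates that arise (essentially controlling $|(u_\eps(x)-u(x))-(u_\eps(y)-u(y))|/|x-y|^s$) are the main technical hurdle and the reason this analysis depends on the equivariance assumption on $\mathcal N$.
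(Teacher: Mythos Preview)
Your geometric input---testing against $u\wedge\psi$ for the sphere and against Killing fields $X_\alpha(u)\eta$ for homogeneous targets---matches the paper's. Two points differ, and the second is a genuine gap.

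\emph{Approximation scheme.} The paper does not penalize. It runs the minimizing-movements iteration directly in the constrained class $W^{s,p}(\Omega,\mathcal N)$, so each $u^k$ already lies in $\mathcal N$ and no $\tfrac1\eps\nabla F$ term ever appears. This is a minor strategic difference; your Ginzburg--Landau route could in principle be made to work.

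\emph{Passage to the limit in $E_{s,p}'$.} Here the paper's mechanism is both simpler and structurally different from your Minty proposal, and it is the central observation of the paper. For the \emph{fractional} $p$-Laplacian (unlike the local one), a.e.\ convergence $f_k\to f$ together with a uniform $L^p(0,T;W^{s,p})$ bound already forces
\[
c_k(x,y,t):=\frac{|f_k(x,t)-f_k(y,t)|^{p-2}\bigl(f_k(x,t)-f_k(y,t)\bigr)g_k(y,t)}{|x-y|^{(n/p+s)(p-1)}}
\]
to converge a.e.\ on $\Omega^2_T$ and to stay bounded in $L^{p'}(\Omega^2_T)$, hence $c_k\rightharpoonup c$ weakly; pairing with $(\varphi(x)-\varphi(y))/|x-y|^{n/p+s}\in L^p(\Omega^2_T)$ is then a linear operation. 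The Killing/wedge cancellation $\langle X_\alpha(p)-X_\alpha(q),\,p-q\rangle=0$ is used only to rewrite the tested equation so that the $u^h$-dependence of the tangential test function lands in the ``$g_k$'' slot. No monotonicity, no commutator control of $(u_\eps-u)(x)-(u_\eps-u)(y)$, is needed; the obstacle you flag simply does not arise in the fractional setting.

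By contrast, your Minty step does not close as written. The standard Minty trick requires testing the approximate equation with $u_\eps-v$ and controlling $\limsup\langle E_{s,p}'(u_\eps),u_\eps\rangle$; in the penalized equation this produces $\tfrac1\eps\int\nabla F(u_\eps)\cdot(u_\eps-v)$, which is not uniformly bounded for generic $v$ (the available bound $\tfrac1\eps\int F(u_\eps)\le C$ only gives $\|\mathrm{dist}(u_\eps,\mathcal N)\|_{L^2}=O(\sqrt\eps)$). Restricting to tangential test functions kills the penalty but destroys the Minty structure. So either you must supply a nonstandard variant of Minty adapted to this constraint, or---much more simply---replace that step by the direct weak-convergence argument above, which works verbatim in your Ginzburg--Landau setting once Aubin--Lions delivers a.e.\ convergence.
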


The attentive reader might have noticed that in \eqref{eqflow} the third equation of \eqref{eq:harmonicmapflow} is missing. In our construction one can easily prescribe the boundary $u_1$ on $\partial \Omega$ in a distributional sense. However, if $s < 1/p$ this is meaningless, since the trace of $W^{s,p}(\Omega)$-functions belongs to $W^{s-1/p,p}(\partial\Omega)$. One way to avoid this issue is to replace the third equation in \eqref{eq:harmonicmapflow} by
\[
    u(\cdot,t) = u_1 \quad \mbox{on $(\RR^n\backslash \Omega) \times (0,\infty)$,}
\]
and then find a flow $u: (0,+\infty) \to W^{s,p}(\RR^n,\mathcal N)$. This is an easy adaptation of the arguments we present below, and we leave it to the interested reader.

Let us remark that in \cite{Pu-Guo2011} they studied the fractional Landau-Lifshitz-Gilbert equation and in particular obtain Theorem~\ref{main} in the case of the $2$-sphere. Also, we would like to point out that the existence of global weak solutions to the $p$-harmonic maps to $\mathbb S^{L-1}$
has been proven by \cite{chen} and \cite{ChenHongHungerbu}.

\section{Preliminaries}
We consider $\Omega \subset \RR^n$ to be bounded. A simple adaption of the argument provides the case  $\Omega = \RR^n$. 

\begin{lemma}\label{la:compactness}
Let $\Omega \subset \RR^n$ be a bounded domain and $K \subset \RR^L$ a compact set. For any $s > 0$, $p \in (1,\infty)$, and any $q \in [1,\infty)$,
$W^{s,p}(\Omega,K)$ is embedded compactly into $L^q(\Omega,K)$.
\end{lemma}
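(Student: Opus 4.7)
The plan is to reduce this to the standard compact fractional Sobolev embedding and then upgrade the target exponent via interpolation against the $L^\infty$-bound that compactness of $K$ forces. First I would observe that every $u \in W^{s,p}(\Omega, K)$ satisfies $\|u\|_{L^\infty(\Omega)} \leq M$, where $M := \sup_{y \in K} |y| < \infty$, so $W^{s,p}(\Omega, K)$ embeds with a uniform $L^\infty$-bound into $W^{s,p}(\Omega, \RR^L) \cap L^\infty(\Omega, \RR^L)$.

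Next I would invoke the well-known compact embedding $W^{s,p}(\Omega) \hookrightarrow\hookrightarrow L^p(\Omega)$ for bounded Lipschitz domains (for $s \in (0,1)$ via the extension-and-Riesz--Fréchet--Kolmogorov argument of Di Nezza--Palatucci--Valdinoci; for $s \geq 1$ by Rellich--Kondrachov). Given a bounded sequence $\{u_j\} \subset W^{s,p}(\Omega, K)$, this produces a subsequence converging in $L^p(\Omega, \RR^L)$ to some $u \in L^p(\Omega, \RR^L)$; extracting a further subsequence gives $u_j \to u$ almost everywhere, and since $K$ is closed we conclude $u(x) \in K$ for a.e.\ $x$, so the candidate limit lies in $L^q(\Omega, K)$.

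To upgrade the convergence from $L^p$ to arbitrary $L^q$ with $q \in [1,\infty)$, I would split into two cases. For $q \leq p$, since $\Omega$ has finite measure, $L^p(\Omega) \hookrightarrow L^q(\Omega)$ continuously and we are done. For $q > p$, the log-convexity interpolation inequality yields
$$\|u_j - u\|_{L^q(\Omega)} \leq \|u_j - u\|_{L^p(\Omega)}^{p/q}\, \|u_j - u\|_{L^\infty(\Omega)}^{1 - p/q} \leq (2M)^{1 - p/q}\, \|u_j - u\|_{L^p(\Omega)}^{p/q},$$
whose right-hand side tends to zero along the chosen subsequence.

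There is no serious obstacle here; the one delicate point is the compact fractional Sobolev embedding when $s \in (0,1)$, which one imports from the literature under the mild boundary regularity (Lipschitz) that is the standing assumption of the paper. Everything else is the standard trick of trading an $L^\infty$-bound for an arbitrary $L^q$-improvement.
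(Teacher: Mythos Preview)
Your argument is correct and follows essentially the same route as the paper: invoke a standard compact fractional Sobolev embedding (the paper lands in $L^1$, you in $L^p$, which is immaterial), use the uniform $L^\infty$ bound coming from compactness of $K$, and interpolate to reach every $L^q$ with $q<\infty$. You additionally spell out why the limit takes values in $K$, which the paper leaves implicit.
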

\begin{proof}
By interpolation we know that $W^{s,p}(\Omega,\RR^L)$ is embedded compactly into $L^1(\Omega,\RR^L)$. On the other hand, $W^{s,p}(\Omega, K) \subset L^\infty(\Omega)$ by the boundedness of $K$. In particular,  $W^{s,p}(\Omega,K)$ is embedded compactly into $L^q(\Omega,K)$ for all $q \in [1,\infty)$.
\end{proof}

For the classical $p$-Laplacian,  it is \emph{not} true that $f_k \rightharpoonup f$ weakly in $W^{1,p}(\Omega)$ implies
that $|\nabla f_k|^{p-2}\nabla f_k\rightharpoonup |\nabla f|^{p-2}\nabla f$ weakly in $L^{\frac{p}{p-1}}(\Omega)$, i.e.,
\[
 \lim_{k \to \infty} \int_\Omega |\nabla f_k|^{p-2} \nabla f_k\ \nabla \varphi 
 = \int_\Omega |\nabla f|^{p-2} \nabla f\ \nabla \varphi, \  \varphi \in W^{1,p}(\Omega).
 \]
A nice feature of the \emph{fractional} $p$-Laplacian is that an analogue is actually \emph{true}. In fact, denote 
$$\Omega^2=\Omega\times \Omega, \ \Omega_T=\Omega\times [0,T],
\ {\rm{and}} \ \Omega^2_T=\Omega^2\times [0,T].$$
For $0<T\le\infty$, we have 
the following property.
\begin{lemma}\label{la:gagliardounderweakconv} For any $0<T<\infty$, 
assume that $f_k\rightarrow f$ a.e. in $\Omega_T$,
$f_k \rightharpoonup f$  in $L^p(0,T;W^{s,p}(\Omega))$. Also assume that $\|g_k\|_{L^\infty(\Omega_T)}$ is bounded
and  $g_k\rightarrow g$ a.e. in $\Omega_T$. 
Then for any $\varphi \in L^p(0,T; W^{s,p}(\Omega))$ it holds that
\[
\begin{split}
 &\int_{\Omega^2_T} 
 \frac{|f_k(x,t)-f_k(y,t)|^{p-2}(f_k(x,t)-f_k(y,t))g_k(y,t)(\varphi(x,t)-\varphi(y,t))}{|x-y|^{n+sp}}\\
 &\xrightarrow{k\rightarrow\infty}\\
 &\int_{\Omega^2_T} \frac{|f(x,t)-f(y,t)|^{p-2}(f(x,t)-f(y,t))g(y,t)(\varphi(x,t)-\varphi(y,t))}{|x-y|^{n+sp}}. 
 \end{split}
\]
\end{lemma}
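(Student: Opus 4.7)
The plan is to recast the integral as a duality pairing in $L^{p'}(\Omega^2_T) \times L^p(\Omega^2_T)$ with respect to ordinary Lebesgue measure by absorbing the kernel $|x-y|^{-(n+sp)}$ symmetrically into the two factors. Concretely, set
\[
F_k(x,y,t) := \frac{|f_k(x,t)-f_k(y,t)|^{p-2}(f_k(x,t)-f_k(y,t))}{|x-y|^{(n+sp)/p'}}, \qquad \Phi(x,y,t) := \frac{\varphi(x,t)-\varphi(y,t)}{|x-y|^{(n+sp)/p}},
\]
so that the integral to be analyzed equals $\int_{\Omega^2_T} F_k\, g_k\, \Phi$, and analogously define $F$ using $f$.

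The first step is to establish that $F_k \rightharpoonup F$ weakly in $L^{p'}(\Omega^2_T)$. Boundedness in $L^{p'}$ is immediate: $\|F_k\|_{L^{p'}}^{p'} = p\, E_{s,p}(f_k(\cdot,t))$ integrated in time, which is uniformly bounded since $f_k$ is weakly convergent (hence bounded) in $L^p(0,T;W^{s,p}(\Omega))$. Pointwise a.e.\ convergence $F_k \to F$ in $\Omega^2_T$ follows from $f_k \to f$ a.e.\ in $\Omega_T$ together with the continuity of the map $z \mapsto |z|^{p-2}z$ on $\R^L$ (with the natural value $0$ at the origin, valid for all $p \in (1,\infty)$). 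The standard fact that an $L^{p'}$-bounded sequence converging a.e.\ converges weakly in $L^{p'}$ (by reflexivity and identification of the weak limit on finite-measure sets via Egorov) then gives the claimed weak convergence.

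The second step handles the $g_k$-factor by strong convergence of the test function. Since $|g_k(y,t)\Phi(x,y,t)| \le \|g_k\|_{L^\infty(\Omega_T)} |\Phi(x,y,t)| \le M|\Phi(x,y,t)|$ with $\Phi \in L^p(\Omega^2_T)$, and since $g_k \Phi \to g\Phi$ a.e.\ on $\Omega^2_T$, the Lebesgue dominated convergence theorem yields $g_k \Phi \to g\Phi$ strongly in $L^p(\Omega^2_T)$. Combining the weak convergence $F_k \rightharpoonup F$ in $L^{p'}$ with the strong convergence $g_k \Phi \to g\Phi$ in $L^p$ gives
\[
\int_{\Omega^2_T} F_k\, g_k\, \Phi \ \longrightarrow\ \int_{\Omega^2_T} F\, g\, \Phi,
\]
which is exactly the desired conclusion.

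The principal obstacle is the nonlinearity in $f_k$, which a priori is incompatible with weak convergence; the resolution is the well-known trick that the symmetric kernel lets us encode the full $p$-power information into the $L^{p'}$-norm of $F_k$, after which the only use of the nonlinear structure is pointwise continuity of $z \mapsto |z|^{p-2}z$. The role of the $L^\infty$ hypothesis on $g_k$ is to convert its mere a.e.\ convergence into strong $L^p$-convergence of the product $g_k \Phi$, so that the pairing with the weakly convergent $F_k$ passes to the limit.
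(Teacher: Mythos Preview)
Your proof is correct and follows essentially the same approach as the paper: split the singular kernel so that the nonlinear factor lands in $L^{p'}(\Omega^2_T)$ and the $\varphi$-difference quotient in $L^p(\Omega^2_T)$, then use a.e.\ convergence plus $L^{p'}$-boundedness to identify the weak limit. The only cosmetic difference is that the paper absorbs $g_k$ into the weakly convergent factor $c_k$ (so the test function $\Phi$ is fixed), whereas you keep $g_k$ with $\Phi$ and invoke dominated convergence to get strong $L^p$-convergence of $g_k\Phi$; both placements work for the same reasons.
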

\begin{proof} From the assumptions on $f_k$ and $g_k$, we have that
\[\begin{split}
&c_k(x,y,t):=\frac{|f_k(x,t)-f_k(y,t)|^{p-2}(f_k(x,t)-f_k(y,t))g_k(y,t)}{|x-y|^{(\frac{n}{p}+s)(p-1)}}\\
&\xrightarrow{k\rightarrow\infty}
c(x,y,t):=\frac{|f(x,t)-f(y,t)|^{p-2}(f(x,t)-f(y,t))g(y,t)}{|x-y|^{(\frac{n}{p}+s)(p-1)}},
\end{split}
\]
for a.e. $(x,y,t)\in\Omega^2_T$. Furthermore, direct calculations imply that
$$
\int_{\Omega^2_T} |c_k(x,y,t)|^{\frac{p}{p-1}}
\le C\big\|g_k\big\|_{L^\infty(\Omega_T)}^{\frac{p}{p-1}} \big\|f_k\big\|_{L^p(0,T; W^{s,p}(\Omega))}^p
\le C.
$$
We conclude that
$c_k(x,y,t)\rightharpoonup c(x,y,t)$ in $L^{\frac{p}{p-1}}(\Omega^2_T)$ as $k\rightarrow\infty$. 
On other hand, if we set
$$\Phi(x,y,t):=\frac{\varphi(x,t)-\varphi(y,t)}{|x-y|^{\frac{n}{p}+s}}, \ (x,y,t)\in\Omega^2_T, $$
then $\Phi\in L^p(\Omega^2_T)$ and
$$\big\|\Phi\big\|_{L^p(\Omega^2_T)}\le C\, \big\|\varphi\big\|_{L^p(0,T; W^{s,p}(\Omega))}^p.$$
Thus we  obtain
$$\int_{\Omega^2_T} c_k(x,y,t)\Phi(x,y,t)\xrightarrow{k\rightarrow\infty}\int_{\Omega^2_T} c(x,y,t) \Phi(x,y,t).$$
This completes the proof. 
\end{proof}

As a direct consequence of Lemma \ref{la:gagliardounderweakconv}, we have
\begin{corollary}\label{la:gagliardounderweakconv1}
Let $f_k \rightharpoonup f$  in $W^{s,p}(\Omega)$, then for any $\varphi \in W^{s,p}(\Omega)$
\[
\begin{split}
 &\lim_{k \to \infty}\int_{\Omega^2} \frac{|f_k(x)-f_k(y)|^{p-2}(f_k(x)-f_k(y))(\varphi(x)-\varphi(y))}{|x-y|^{n+sp}}\\
 &= \int_{\Omega^2} \frac{|f(x)-f(y)|^{p-2}(f(x)-f(y))(\varphi(x)-\varphi(y))}{|x-y|^{n+sp}}. 
 \end{split}
\]
\end{corollary}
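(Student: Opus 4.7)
The plan is to reduce the statement to Lemma \ref{la:gagliardounderweakconv} by viewing the stationary functions as time-independent functions on $\Omega \times [0,1]$. First, I would invoke the compact embedding $W^{s,p}(\Omega) \hookrightarrow L^p(\Omega)$ (the fractional Rellich--Kondrachov theorem on bounded Lipschitz domains) to conclude from $f_k \rightharpoonup f$ in $W^{s,p}(\Omega)$ that, up to extracting a subsequence not relabeled, $f_k \to f$ strongly in $L^p(\Omega)$ and in particular pointwise almost everywhere in $\Omega$.

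Next I would apply Lemma \ref{la:gagliardounderweakconv} with $T = 1$, reading $f_k$, $f$, and $\varphi$ as functions on $\Omega_T = \Omega \times [0,1]$ that are constant in $t$, and setting $g_k \equiv g \equiv 1$. All hypotheses of the lemma are then immediate: the a.e.\ convergence $f_k \to f$ and $g_k \to g$ on $\Omega_T$, the weak convergence of $f_k$ in $L^p(0,T; W^{s,p}(\Omega))$ (a trivial consequence of weak convergence in $W^{s,p}(\Omega)$ for functions constant in $t$, by testing against $t$-independent elements of the dual), and the uniform $L^\infty$-boundedness of $g_k$. The conclusion of the lemma then yields convergence of the $t$-integrated integrals over $\Omega^2 \times [0,1]$, which coincide exactly with the integrals in the statement of the corollary, since the integrands are constant in $t$ and $T = 1$.

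The only remaining point is that the argument above produces convergence only along a subsequence. To upgrade this to convergence of the full original sequence, I would appeal to the standard subsequence principle: any subsequence of $(f_k)$ still converges weakly to $f$ in $W^{s,p}(\Omega)$, so the argument above applied to that subsequence extracts a further subsequence along which the integrals converge to the same value, namely the right-hand side of the stated identity. Since every subsequence admits a further subsequence converging to this common limit, the full sequence converges to it. There is no substantive obstacle; the only mildly delicate point is remembering that the compactness step is only valid up to a subsequence, which must be promoted to convergence of the entire sequence by uniqueness of the limit.
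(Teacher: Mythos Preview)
Your proposal is correct and is exactly the intended argument: the paper gives no details beyond declaring the corollary ``a direct consequence of Lemma~\ref{la:gagliardounderweakconv}'', and your reduction (constant-in-$t$ extension with $T=1$, $g_k\equiv 1$, compact embedding for the a.e.\ convergence, and the subsequence principle to recover the full sequence) is precisely how one makes that direct consequence rigorous. One minor remark: your parenthetical about ``testing against $t$-independent elements of the dual'' is not literally how weak convergence in $L^p(0,T;W^{s,p})$ is defined, but the claim itself is fine since for $t$-constant $f_k$ the pairing with any $\Phi\in L^{p'}(0,T;(W^{s,p})^*)$ collapses to $\langle\int_0^T\Phi(t)\,dt,\,f_k\rangle$.
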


We also need the following fact in the proof of Theorem 1.1.
\begin{lemma}
The functional $E_{s,p}$ is sequentially lower semi-continuous with respect to the weak topology on $W^{s,p}(\Omega,\RR^L)$.
\end{lemma}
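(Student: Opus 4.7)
The plan is to exhibit $E_{s,p}$ as $\tfrac{1}{p}$ times the $p$-th power of an $L^p$-norm of a bounded linear image of $u$, and then invoke weak lower semi-continuity of $L^p$-norms on the reflexive space $L^p(\Omega \times \Omega, \mathbb R^L)$.

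First I would introduce the operator
\[
 T : W^{s,p}(\Omega, \mathbb R^L) \longrightarrow L^p(\Omega \times \Omega, \mathbb R^L),
 \qquad (Tu)(x,y) := \frac{u(x)-u(y)}{|x-y|^{n/p+s}}.
\]
By the very definition of $E_{s,p}$ one has $\|Tu\|_{L^p(\Omega^2)}^p = p\,E_{s,p}(u)$, so $T$ is linear and bounded. Since a bounded linear map between Banach spaces is automatically weak-to-weak continuous, any sequence $u_k \rightharpoonup u$ weakly in $W^{s,p}(\Omega, \mathbb R^L)$ yields $Tu_k \rightharpoonup Tu$ weakly in $L^p(\Omega^2, \mathbb R^L)$. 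Weak lower semi-continuity of the $L^p$-norm (a convex, norm-continuous functional) then gives
\[
 p\,E_{s,p}(u) = \|Tu\|_{L^p(\Omega^2)}^p \leq \liminf_{k\to\infty} \|Tu_k\|_{L^p(\Omega^2)}^p = \liminf_{k\to\infty} p\,E_{s,p}(u_k),
\]
which is the claim.

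An alternative proof, more in the style of the preceding lemmas, proceeds via Fatou: from any subsequence realising $\liminf_k E_{s,p}(u_k)$ extract a further subsequence that converges to $u$ almost everywhere on $\Omega$ (using Lemma~\ref{la:compactness} to upgrade weak $W^{s,p}$-convergence to strong $L^p$-convergence), apply Fatou to the pointwise-nonnegative integrand $|u_k(x)-u_k(y)|^p/|x-y|^{n+sp}$, and conclude by the standard subsequence-of-subsequence trick. I do not foresee any real obstacle here: the only non-trivial input is the boundedness of $T$, which is immediate from the definition of the Gagliardo seminorm, and everything else is general Banach space nonsense together with the convexity of $r \mapsto r^p$.
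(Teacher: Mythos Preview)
Your main argument is correct and is in fact the cleanest route: recognising $E_{s,p}$ as $\tfrac{1}{p}\|T\cdot\|_{L^p(\Omega^2)}^p$ for the bounded linear map $T$ immediately reduces the claim to weak lower semi-continuity of an $L^p$-norm.

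The paper takes a genuinely different path. It expresses, via duality, a power of the Gagliardo seminorm of $f$ as a supremum of pairings $E_{s,p}'(f,\varphi)$ over $\|\varphi\|_{W^{s,p}}\le 1$, then for a near-optimal $\varphi_\eps$ invokes Corollary~\ref{la:gagliardounderweakconv1} (the convergence result for the fractional $p$-Laplacian under weak $W^{s,p}$-convergence) to replace $f$ by $f_k$ in that pairing, and finally bounds back by H\"older. What the paper's approach buys is a showcase of Corollary~\ref{la:gagliardounderweakconv1}, which is the workhorse for the later limit passage in the discretised Euler--Lagrange equation; what your approach buys is brevity and independence from the preceding lemmas, since it uses nothing beyond abstract Banach-space facts. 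One minor quibble with your Fatou alternative: Lemma~\ref{la:compactness} as stated requires the target to be a compact set $K$, whereas the present lemma is for $\mathbb R^L$-valued maps, so you would need the ordinary compact embedding $W^{s,p}(\Omega)\hookrightarrow L^p(\Omega)$ (valid for bounded Lipschitz $\Omega$) rather than Lemma~\ref{la:compactness}.
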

\begin{proof} Assume that $f_k \rightharpoonup f$ in $W^{s,p}(\Omega,\RR^L)$. Now we use a duality argument: It holds that
\[
\begin{split}
 &\left (\int_{\Omega^2} \frac{|f(x)-f(y)|^p}{|x-y|^{n+sp}} \right )^{\frac{p}{p-1}}\\
 &\leq \sup_{\varphi} \int_{\Omega^2} \frac{|f(x)-f(y)|^{p-2}(f(x)-f(y))(\varphi(x)-\varphi(y))}{|x-y|^{n+sp}},
 \end{split}
\]
where the supremum is taken over $\varphi \in W^{s,p}({\Omega},\RR^L)$, with $\|\varphi\|_{W^{s,p}(\Omega)} \leq 1$. In particular, for any $\eps > 0$ we find some $\varphi_\eps \in W^{s,p}({\Omega}, \RR^L)$, with $\|\varphi_\eps\|_{W^{s,p}(\Omega)} \leq 1$, such that
\[
\begin{split}
 &\left (\int_{\Omega^2} \frac{|f(x)-f(y)|^p}{|x-y|^{n+sp}}\ dx dy \right )^{\frac{p}{p-1}}\\
 \leq &(1+\eps)\int_{\Omega^2} \frac{|f(x)-f(y)|^{p-2}(f(x)-f(y))(\varphi_\eps(x)-\varphi_\eps(y))}{|x-y|^{n+sp}}.
 \end{split}
\]
By Lemma~\ref{la:gagliardounderweakconv}, the right-hand side of the above inequality is equal to
\[
 (1+\eps)\lim_{k\to \infty}\int_{\Omega^2} \frac{|f_k(x)-f_k(y)|^{p-2}(f_k(x)-f_k(y))(\varphi_\eps(x)-\varphi_\eps(y))}{|x-y|^{n+sp}},
\]
which, by H\"older's inequality, can be bounded by
\[
\leq  (1+\eps)\liminf_{k\to \infty} \left (\int_{\Omega^2} \frac{|f_k(x)-f_k(y)|^p}{|x-y|^{n+sp}}\ dx dy \right )^{\frac{p}{p-1}}.
\]
Sending $\eps \to 0$, we then obtain the conclusion.
\end{proof}

%
%

\section{Proof of Theorem~\ref{main}: Step 1}\label{sec:step1}
Step 1 is the time discretization of the flow (\ref{eq:harmonicmapflow}). This step works for any compact Riemannian manifold $\mathcal N\subset\mathbb R^L$ without boundary. To the best of our knowledge this idea is due to an unpublished work by Kukuchi.
Fix $h > 0$. Starting from $u_0\in W^{s,p}(\Omega, \mathcal N)$, for $k\ge 1$ we define $u^k \in W^{s,p}(\Omega,\mathcal N)$ to be a minimizer of 
${\mathcal E}_k$ in the space of maps $v \in W^{s,p}(\Omega,\mathcal N)$, where 
\[
 {\mathcal E}_k(v) := E_{s,p}(v) + \int_{\Omega} \frac{|v-u^{k-1}|^2}{2h}.
\]
This existence of $u^k$ follows from the direct method of calculus of variations, since ${\mathcal E}_k(\cdot)$ is coercive and sequentially lower semicontinuous 
with respect to the weak topology in $W^{s,p}(\Omega)$. Also observe the compactness of $\mathcal{N}$ guarantees that 
$u^k(x) \in \mathcal N$ for a.e. $x \in \Omega$, see Lemma~\ref{la:compactness}.

By direct calculations, $u^k$ satisfies the Euler-Lagrange equation:
\begin{eqnarray}\label{discreteEL}
&&\int_\Omega \frac{u^k-u^{k-1}}{h} \varphi^k\nonumber\\
&&=-\int_{\Omega^2}\frac{|u^k(x)-u^k(y)|^{p-2}(u^k(x)-u^k(y))(\varphi^k(x)-\varphi^k(y))}{|x-y|^{n+sp}},
\end{eqnarray}
where $\varphi^k=P_{\mathcal N}(u^k)(\psi)$,  $P_{\mathcal N}(y):\mathbb R^L\to T_y\mathcal N$, for $y\in\mathcal N$,
is the orthogonal projection\footnote{It is well-known that there exists small $\delta>0$ depending
only on $\mathcal N$ such that there exists a smooth nearest point projection map $\Pi_{\mathcal N}$ from $\mathcal N_\delta$, the $\delta$-neighborhood of $\mathcal N$,
to $\mathcal N$, see also \cite[Appendix 2.12.3]{SimonETH}. Note that $P_{\mathcal N}(y)=\nabla\Pi_{\mathcal N}(y)$ for $y\in\mathcal N$. We revisit this fact in Lemma~\ref{la:Projection}.},
and $\psi\in C^\infty_0(\Omega,\mathbb R^L)$. 

It is clear, by the minimality and comparison, that for $j=1,\cdots, k$,
\[
 \mathcal E_j(u^j)=E_{s,p}(u^j)+\int_\Omega \frac{|u^j-u^{j-1}|^2}{2h}\leq \mathcal E_{j}(u^{j-1}) = E_{s,p}(u^{j-1}).
\]
Taking summation over $j=1,\cdots, k$, we obtain that 
\begin{equation}\label{discreteenergyinequality}
 E_{s,p}(u^k) +\sum_{j=1}^k \int_\Omega \frac{|u^j-u^{j-1}|^2}{2h} \leq E_{s,p}(u^0), \ \forall\ k\ge 1.
\end{equation}
Here we denote $u^0=u_0$.
Now we define $u^h:\Omega\times [0,\infty)\to\mathcal N$ and $v^h:  \Omega\times [0,\infty)\to\mathbb R^L$ by letting
\[\displaystyle
 \begin{cases} u^h(x,t) = u^k(x), \\ v^h(x,t)=\frac{t-kh}{h} u^{k+1}(x)+\frac{(k+1)h-t}{h} u^k(x),\end{cases}
 \]
when $x\in\Omega,\ t \in [kh,(k+1)h)$ for some $k\ge 0$.

It follows from (\ref{discreteenergyinequality}) that for any $h>0$, the following energy inequality holds:
\begin{equation}\label{continousenergyinequality1}
 E_{s,p}(u^h(\cdot,t)) +\frac12\int_0^t\int_\Omega |\partial_t v^h|^2 \leq E_{s,p}(u_0), \ \forall\ t>0.
\end{equation}
Since $\mathcal{N}$ is compact, it is easy to see that there exists $C>0$ depending only on $\mathcal N$ such that for any $h>0$, 
\begin{equation}\label{pointbound}
|v^h(x,t)|\le C, \ \forall\ x\in\Omega, t>0.
\end{equation}
Moreover, for any $h>0$ and $t>0$, we have, by convexity and (\ref{discreteenergyinequality}),
\begin{eqnarray}\label{energybound}
E_{s,p}(v^h(\cdot, t))&\le& \frac{t-kh}{h} E_{s,p}(u^{k+1})+\frac{(k+1)h-t}{h} E_{s,p}(u^{k})\nonumber\\
&\le& E_{s,p}(u_0),
\end{eqnarray}
where $k\ge 0$ is chosen so that $kh\le t<(k+1)h$.

It follows from (\ref{continousenergyinequality1}), (\ref{pointbound}), and (\ref{energybound})
that there exist $u, v: \Omega\times (0,\infty)\to\mathbb R^L$ such that after passing to a subsequence,
$$v^h\rightarrow v \ {\rm{in}}\ L^2_{\rm{loc}}(\Omega\times (0,\infty)),\ (\partial_t v^h, \nabla v^h)\rightharpoonup (\partial_t v, \nabla v)  \ {\rm{in}}\ L^2_{\rm{loc}}(\Omega\times (0,\infty)),$$
and
$$
u^h\rightharpoonup u \ {\rm{in}}\ L^p_{\rm{loc}}((0,\infty),W^{s,p}(\Omega))
,
$$
as $h\rightarrow 0$.

Note that from the definitions of $u^h$ and $v^h$, and (\ref{discreteenergyinequality}) that there exists $C>0$ such that for any $0<T<+\infty$
\begin{equation}\label{l2closedness}
\int_0^T\int_\Omega |u^h-v^h|^2\le Ch\sum_{i=1}^{[\frac{T}h]+1}\int_\Omega |u^{i+1}-u^i|^2 \le Ch^2 E_{s,p}(u_0),
\end{equation}
where $[\frac{T}{h}]$ denotes the largest integer part of $\frac{T}{h}$. As an immediate consequence of (\ref{l2closedness}), we
obtain that $u\equiv v$ in $\Omega\times (0,\infty)$. Moreover, since $u^h(x,t)\in \mathcal N$ for a.e. $(x,t)\in\Omega\times (0,\infty)$,
(\ref{l2closedness}) also implies that $u(x,t)\in \mathcal N$ for a.e. $(x,t)\in\Omega\times (0,\infty)$. By the sequential lower semicontinuity of $E_{s,p}$ and
(\ref{continousenergyinequality1}), we conclude that  for a.e. $t>0$, it holds
\begin{equation}\label{continousenergyinequality2}
 E_{s,p}(u(\cdot,t)) +\frac12\int_0^t\int_\Omega |\partial_t u|^2 \leq E_{s,p}(u_0).
\end{equation}
In particular, $u\in L^\infty((0,\infty), W^{s,p}(\Omega,\mathcal N))$ and $\partial_t u\in L^2(\Omega\times (0,\infty))$.

Finally, it follows from the Euler-Lagrange equation (\ref{discreteEL}) that  
\begin{eqnarray}\label{discreteEL1}
&&\int_{\Omega^2_T}\frac{|u^h(x,t)-u^h(y,t)|^{p-2}(u^h(x,t)-u^h(y,t))(\varphi(x,t)-\varphi(y,t))}{|x-y|^{n+sp}}
\nonumber\\
&&\qquad\qquad\ \ \ \ \ \ \ \ \ \ \ \ \ \ \ \ \ \ \ \ \ \ \ \ \ \ \ \ \ \  +\int_{\Omega_T} \partial_t v^h(x,t) \varphi(x,t)=0,
\end{eqnarray}
for any $\varphi\in L^\infty(0,T; W^{s,p}_0(\Omega,T_{u^h(\cdot, t)}\mathcal N))\cap L^\infty(\Omega_T,\mathbb R^L).$
\footnote{\ Here we denote the Sobolev space
$$W^{s,p}_0(\Omega, T_{u^h(\cdot, t)}\mathbb S^{L-1}):=\Big\{\psi\in W^{s,p}_0(\Omega,\mathbb R^L): \ \psi(x,t)\in T_{u^h(x,t)}\mathbb S^{L-1}
\ {\rm{a.e.}}\ x\in\Omega\Big\}.$$}

\noindent{\it Step 2}: $u$ satisfies the equation (\ref{eqflow}) in the sense of distributions
when $\mathcal N=\mathbb S^{L-1}$ or when $\mathcal N$ is a compact homogenous Riemannian manifold. 
In the case of the sphere, this follows from the arguments in Section~\ref{sec:sphere}. For homogeneous Riemannian manifolds see Section~\ref{sec:homo}. 

\section{Proof of Theorem~\ref{main}: Step 2 - the case of a sphere}\label{sec:sphere}
Let $u$ be constructed as in Section~\ref{sec:step1}. Assume that $\mathcal{N} = \mathbb{S}^{L-1}$. Then $u$ satisfies the equation (\ref{eqflow}) in the sense of distributions
when $\mathcal N=\mathbb S^{L-1}$. That is, for any $0<T<\infty$,
\begin{equation}\label{eq:flow1}
\int_{\Omega_T} \langle\partial_t u(x,t),\varphi(x,t)\rangle+\int_0^TE_{s,p}'(u(\cdot, t), \varphi(\cdot, t))=0, 
\end{equation}
for all $\varphi\in L^\infty(0,T; W^{s,p}_0(\Omega, T_{u(\cdot, t)}\mathbb S^{L-1}))\cap L^\infty(\Omega_T, \mathbb R^L)$.

For any $\psi\in L^\infty(0,T; W^{s,p}_0(\Omega, \mathbb R^L))\cap L^\infty(\Omega_T,\mathbb R^L)$, we can check that
$$\varphi^h=u^h\times \psi\in L^\infty(0,T; W^{s,p}_0(\Omega, T_{u^h(\cdot, t)}\mathbb S^{L-1}))
\cap L^\infty(\Omega_T,\mathbb R^L).$$ 
Substituting $\varphi^h$ into (\ref{discreteEL1}), we obtain that
$$\int_{\Omega_T} \langle\partial_t v^h(x,t), u^h(\cdot, t)\times \psi(\cdot, t)\rangle+
\int_0^T E_{s,p}'(u^h(\cdot, t), u^h(\cdot, t)\times\psi(\cdot, t))=0.$$
It is readily seen that
$$\int_{\Omega_T} \langle\partial_t v^h(x,t), u^h(\cdot, t)\times \psi(\cdot, t)\rangle\xrightarrow{h\to 0}
\int_{\Omega_T} \langle\partial_t u(x,t), u(\cdot, t)\times \psi(\cdot, t)\rangle.$$
Now we want to apply Lemma~\ref{la:gagliardounderweakconv} and the symmetry of $\mathbb S^{L-1}$ 
\footnote{See \cite{chen} for the heat flow of harmonic maps and \cite{Shatah} for wave maps to $\mathbb S^{L-1}$.}
to show that
\begin{equation}\label{convergenceEL}
 \int_0^T E_{s,p}' (u^h(\cdot, t),u^h(\cdot, t)\times \psi(\cdot, t))
 \xrightarrow{h \to 0}\int_0^T E_{s,p}'(u(\cdot, t),u\times\psi(\cdot, t)).
 \end{equation}
To show (\ref{convergenceEL}),  first note that
 \begin{eqnarray}\label{cancellationID}
&&\langle u^h(x,t)-u^h(y,t), u^h(x,t)\times\psi(x,t)-u^h(y,t)\times\psi(y,t)\rangle\nonumber\\
&&=\langle (u^h(x,t)-u^h(y,t))\times u^h(x,t), \psi(x,t)\rangle\nonumber\\
&&\quad-\langle (u^h(x,t)-u^h(y,t))\times u^h(y,t),\psi(y,t)\rangle\nonumber\\
&&=\langle (u^h(x,t)-u^h(y,t))\times u^h(x,t), \psi(x,t)\rangle\\
&&\quad-\langle (u^h(x,t)-u^h(y,t))\times u^h(x,t),\psi(y,t)\rangle\nonumber\\
&&=\langle (u^h(x,t)-u^h(y,t))\times u^h(x,t), \psi(x,t)-\psi(y,t)\rangle,\nonumber
\end{eqnarray}
where we have used the fact 
$$(u^h(x,t)-u^h(y,t))\times (u^h(x,t)-u^h(y,t))=0.$$
For $v:\Omega_T\to \mathbb R^L$, set
$${\mathcal C}^p[v](x,y,t)=\frac{|v(x,t)-v(y,t)|^{p-2}}{|x-y|^{n+sp}}, \ \ (x,y,t)\in\Omega_T^2.$$
Then we can  rewrite $\int_0^TE_{s,p}'(u^h(\cdot, t),u^h(\cdot, t)\times\psi(\cdot, t))$ as follows.
\[
 \begin{split}
 &\int_0^TE_{s,p}'\big(u^h(\cdot, t), u^h(\cdot, t)\times\psi(\cdot, t)\big)= \\
&\int_{\Omega^2_T} {\mathcal C}^p[u^h](x,y,t)\langle(u^h(x,t)-u^h(y,t))\times u^h(x, t), \psi(x,t) -\psi(y,t)\rangle.
\end{split}
\]
From Lemma~\ref{la:gagliardounderweakconv} and (\ref{cancellationID}), we see that
\[
\begin{split}
&\int_{\Omega^2_T} {\mathcal C}^p[u^h](x,y,t)\langle(u^h(x,t)-u^h(y,t))\times u^h(x, t), \psi(x,t) -\psi(y,t)\rangle\\
&\xrightarrow{h\rightarrow 0}
\int_{\Omega^2_T}{\mathcal C}^p[u](x,y,t)\langle(u(x,t)-u(y,t))\times u(x, t), \psi(x,t) -\psi(y,t)\rangle\\
&\qquad=\int_0^T E_{s,p}'(u(\cdot, t), u(\cdot, t)\times \psi(\cdot, t)).
\end{split}
\]
Thus we obtain that 
\begin{eqnarray} \label{eq:flow2}
&&\int_{\Omega_T} \langle\partial_t u(x,t), u(\cdot, t)\times \psi(\cdot, t)\rangle+\int_0^T E_{s,p}'(u(\cdot, t), u(\cdot, t)\times \psi(\cdot, t))\\
&&=0\nonumber 
\end{eqnarray}
holds for all $\psi\in L^\infty(0,T; W^{s,p}_0(\Omega, \mathbb R^L))\cap L^\infty(\Omega_T,\mathbb R^L)$.  

Finally, we claim
that (\ref{eq:flow2}) implies (\ref{eq:flow1}).  In fact, for any $\varphi\in L^\infty(0,T; W^{s,p}_0(\Omega, T_{u(\cdot, t)}\mathbb S^{L-1}))\cap L^\infty(\Omega_T,\mathbb R^L)$,
if we define 
$$\psi(\cdot, t)=-u(\cdot,t)\times \varphi(\cdot, t),$$ 
then we have 
$\psi\in L^\infty(0,T; W^{s,p}_0(\Omega, \mathbb R^{L}))\cap L^\infty(\Omega_T,\mathbb R^L)$, and
\[
\begin{split}
&u(\cdot, t)\times\psi(\cdot, t)=(u(\cdot,t)\times\varphi(\cdot, t))\times u(\cdot, t)\\
&=\langle u(\cdot, t), u(\cdot, t)\rangle\varphi(\cdot, t)-\langle u(\cdot, t),\varphi(\cdot, t)\rangle u(\cdot, t)\\
&=\varphi(\cdot, t),
\end{split}
\]
where we have used $\langle u(\cdot, t), u(\cdot, t)\rangle=0$, and $\langle u(\cdot, t),\varphi(\cdot, t)\rangle=0$ since $\varphi(\cdot, t)\in T_{u(\cdot, t)}\mathbb S^{L-1}$.
Thus (\ref{eq:flow1}) follows from (\ref{eq:flow2}).  This completes the proof when $\mathcal N=\mathbb S^{L-1}$.
We will give a proof for $\mathcal N$ a compact Riemannian homogeneous manifold in the next section.
\qed

\section{Proof of Theorem~\ref{main}: Step 2 - the case of homogeneous  Riemannian manifolds}\label{sec:homo}	
In this section, we will show that theorem 1.1 also holds when the target manifold $\mathcal N$ is a compact Riemannian
homogeneous manifold that is equivariantly embedded in $\mathbb R^L$. 

First we recall the following property on compact Riemannian homogeneous manifolds, which was 
proven by \cite{Moore-Schlafly1980}, see also \cite{freire}.
\begin{theorem}[Moore-Schlafly]
For any compact Riemannian manifold $\mathcal{N}$ with a compact Lie group $G$ acting on it by isometries we find for some $L \in \mathbb{N}$ an orthogonal representation $\rho$ of $G$ to the isometric group of an Euclidean space $\mathbb R^L$ and an isometric embedding $\Psi: \mathcal{N} \to \R^L$ which is equivariant with respect to $\rho$, that is $\Psi(g p) = \rho(g)\ \Psi(p)$
for all $p\in\mathcal N$ and $g\in G$.
\end{theorem}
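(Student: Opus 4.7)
The proof I would give proceeds in two stages: first constructing a smooth $G$-equivariant embedding of $\mathcal{N}$ into some Euclidean space via a Peter--Weyl-type argument, then upgrading it to an isometric embedding via an equivariant Nash perturbation. A technical remark at the outset: by averaging any background Riemannian metric over $G$, we may and do assume throughout that the $G$-action on $\mathcal{N}$ preserves a fixed smooth Riemannian metric, which we still call $g_{\mathcal{N}}$.

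\textbf{Smooth equivariant embedding.} Consider the representation of $G$ on $C^\infty(\mathcal{N})$ defined by $(g\cdot f)(p) = f(g^{-1}p)$. Call $f\in C^\infty(\mathcal N)$ \emph{$G$-finite} if its orbit $\{g\cdot f: g\in G\}$ spans a finite-dimensional subspace. The key ingredient is that the $G$-finite smooth functions are dense in $C^\infty(\mathcal{N})$ in the Fr\'echet topology. This follows from applying the Peter--Weyl theorem to the unitary $G$-action on $L^2(\mathcal{N})$, decomposing into isotypical components, and noting that each isotypical component consists of smooth functions (by elliptic regularity applied to any $G$-invariant Laplacian on $\mathcal{N}$, whose existence uses compactness of $G$). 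Using this density together with the standard fact that separating points and tangent vectors is an open and generic condition, and using compactness of $\mathcal{N}$, I would select finitely many $G$-finite functions $f_1,\dots,f_N\in C^\infty(\mathcal N)$ such that $p\mapsto (f_1(p),\dots,f_N(p))$ is an injective smooth immersion, hence an embedding by compactness.

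Now let $V\subset C^\infty(\mathcal{N})$ be the finite-dimensional $G$-invariant subspace spanned by all translates $g\cdot f_i$. Fix any inner product on $V$ and average it over the Haar measure of $G$ to obtain a $G$-invariant inner product, turning the $G$-action on $V$ into an orthogonal representation. The evaluation map
\[
\Phi_0:\mathcal{N}\to V^*,\qquad \Phi_0(p)(v)=v(p),
\]
is then a smooth embedding that is $G$-equivariant with respect to the contragredient orthogonal representation on $V^*\cong \mathbb{R}^{L_0}$.

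\textbf{Isometric modification.} The pullback metric $h_0:=\Phi_0^* g_{\mathrm{Euc}}$ is a smooth $G$-invariant Riemannian metric on $\mathcal{N}$, but will generally differ from $g_{\mathcal{N}}$. To correct this, I would enlarge $\Phi_0$ by adjoining further $G$-finite functions so that the resulting embedding is \emph{free} in the sense of Nash (again a generic condition, achievable equivariantly by the same density argument), and then apply the Nash--Moser iteration to produce an isometric perturbation. The point is that the iteration can be carried out in the $G$-invariant category: the averaging operator $f\mapsto \int_G g\cdot f\,dg$ is a continuous projection onto $G$-invariant tensors on all relevant H\"older/Sobolev spaces and commutes with smoothing operators, so applying it at each iterative step preserves equivariance without disturbing convergence. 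Equivalently, one can use an equivariant decomposition $g_{\mathcal{N}}-h_0=\sum_i \lambda_i^2\, d\phi_i\otimes d\phi_i$ with $\lambda_i,\phi_i$ $G$-finite (obtained from a $G$-averaged partition of unity and local Nash-type decompositions) and append $(\lambda_1\phi_1,\dots,\lambda_k\phi_k)$ to $\Phi_0$.

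\textbf{Main obstacle.} The smooth equivariant embedding is essentially a bookkeeping exercise once Peter--Weyl is in hand. The genuine difficulty is the isometric upgrade: ensuring that the Nash perturbation can be performed while strictly preserving both the $G$-equivariance and the orthogonality of the representation at every step. This is ultimately handled by the observation that the $G$-averaging projector is bounded on the scales of spaces used in the Nash--Moser scheme and commutes with the relevant smoothing operators, so the standard iterative estimates descend to $G$-invariant subspaces; verifying this carefully is the technical heart of the argument, and is precisely what is carried out in \cite{Moore-Schlafly1980}.
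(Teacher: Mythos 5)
The paper does not prove this theorem: it is stated as a known result, attributed to Moore and Schlafly, and immediately cited to \cite{Moore-Schlafly1980} (with \cite{freire} as an additional reference). It is then used only as a black box, to decompose the tangential projection $P_{\mathcal N}$ along $G$-Killing vector fields in Section 5. There is therefore no proof in the paper for your attempt to be compared against.

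Judged on its own, your outline does track the actual Moore--Schlafly strategy: a Mostow--Palais-type smooth $G$-equivariant embedding built from $G$-finite functions (density via Peter--Weyl plus elliptic regularity for a $G$-invariant Laplacian), followed by an equivariant Nash-type upgrade to an isometric embedding. But the second stage, which you correctly identify as the heart of the matter, is asserted rather than carried out, and there are concrete gaps. First, before perturbing a free equivariant embedding $\Phi_0$ to an isometric one, you need $g_{\mathcal N}-\Phi_0^*g_{\mathrm{Euc}}$ to be a positive-definite, $G$-invariant, and sufficiently small tensor; the rescaling (and enlargement of the target) that arranges this is missing from your argument. Second, the claimed equivariant decomposition $g_{\mathcal N}-h_0=\sum_i\lambda_i^2\,d\phi_i\otimes d\phi_i$ with $G$-finite $\lambda_i,\phi_i$ spanning an orthogonally $G$-invariant subspace is essentially the whole content of the theorem and cannot simply be invoked: achieving the pointwise quadratic identity while keeping a \emph{finite} family closed under $G$ is exactly what requires work. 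Third, ``the averaging projector commutes with smoothing operators'' is not automatic; one must first construct $G$-equivariant smoothing operators (by averaging) and then verify that the tame estimates in the Nash--Moser scheme survive. Since you explicitly defer all of these verifications to \cite{Moore-Schlafly1980}, the proposal is an accurate high-level summary of the approach rather than a proof.
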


With Theorem 5.1, we can follow the arguments by \cite{freire} pages 527-528 to show that,  by assuming $\mathcal N\subset \mathbb R^L$ and $G\subset {\rm{Iso}}(\mathbb R^L)$, the isometric group of $\mathbb R^L$, 
for any $G$-killing vector field $X$ on $\mathcal N$, there exists a killing vector field $\widehat{X}$ on
$\mathbb R^L$ with respect to ${\rm{Iso}}(\mathbb R^L)$ such that
\[
X=\widehat{X} \big|_{\mathcal N}.
\]
It follows from \cite{Helein} Lemma 2 that 
there exist a family of $G$-killing vector fields $\{X_\alpha\}_{\alpha=1}^l$ on $\mathcal N$, with $l={\rm{dim}}(G)$,  
and another family of
smooth vector fields $\{Y_\alpha\}_{\alpha=1}^l$ on $\mathcal N$ such that for any $y\in\mathcal N$, it holds that
\begin{equation}\label{decom}
 v = \sum_{\alpha = 1}^l\langle X_\alpha(y), v \rangle Y_\alpha(y) \quad \forall\ v \in T_y\mathcal{N}.
\end{equation}
We use crucially the following property of our Killing fields:
\begin{equation}\label{killthekill}
 \langle X_\alpha(p) - X_\alpha(q),p-q\rangle = 0 \quad \forall p,q \in \mathcal{N},\ \alpha = 1,\ldots,l.
\end{equation}
\begin{proof}[Proof of \eqref{killthekill}]In fact,  let $\{\widehat{X_\alpha}\}_{\alpha=1}^p$ be a family of killing vector fields on $\mathbb R^L$, with respect to
${\rm{Iso}}(\mathbb R^L)$, such that 
\[
X_\alpha(y)=\widehat{X_\alpha}(y), \ \forall \ y\in \mathcal N, \ 1\le\alpha\le l.
\]
Then we have
\[\begin{split}
\langle X_\alpha(p) - X_\alpha(q),p-q\rangle &= \langle \widehat{X_\alpha}(p) -\widehat{X_\alpha}(q),p-q\rangle \\
&=\langle D\widehat{X_\alpha}(p_*)(p-q), p-q\rangle\\
&=0,
\end{split}
\]
for some point $p_*$ in the line segment $[p,q]$, where we have used the fact that 
$D\widehat{X_\alpha}$ is skew-symmetric in the last step.
\end{proof}



We also need the following welll-known fact. See also \cite[Appendix 2.12.3]{SimonETH}.
\begin{lemma}\label{la:Projection}
Let $\Pi_\mathcal{N}:\mathcal{N}_\delta \to \mathcal{N} \subset \R^L$ be the orthogonal projection from the $\delta$- tubular neighbourhood of $\mathcal{N}$ into $\mathcal{N}$ for some small $\delta>0$. 
For $p\in\mathcal N$, set $P_{\mathcal N}(p) \in \R^{L \times L}$ by
\[
P_\mathcal{N}(p)(v) \equiv \frac{d}{dt} \Big|_{t = 0}\Pi_\mathcal{N}(p+tv), \ v\in\mathbb R^L.
\]
Then
\[
 P_\mathcal{N}(p)(v) = v \quad \forall v \in T_p \mathcal{N}, \quad P_\mathcal{N}(p)(v)= 0\quad \forall v \in (T_p \mathcal{N})^\perp. 
\]
In particular, $P_\mathcal{N}(p)^2 = P_\mathcal{N}(p)$ and $(P_\mathcal{N}(p))^T = P_\mathcal{N}(p)$ for any $p\in\mathcal N$.
\end{lemma}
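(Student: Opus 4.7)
The plan is to recognize that $P_{\mathcal{N}}(p)$ is simply the Fréchet differential $d\Pi_{\mathcal{N}}|_p$ of the nearest point projection, and to compute its action on tangent and normal directions separately. This will identify $P_{\mathcal{N}}(p)$ with the orthogonal projection of $\mathbb{R}^L$ onto $T_p\mathcal{N}$, and the four claimed identities will then follow automatically.

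First I would recall the standard construction of $\Pi_{\mathcal{N}}$: the normal exponential map $E(p,\nu) := p + \nu$ from the normal bundle of $\mathcal{N}$ into $\mathbb{R}^L$ is a local diffeomorphism at the zero section, since its derivative at $(p,0)$ is the identity under the splitting $\mathbb{R}^L = T_p\mathcal{N}\oplus (T_p\mathcal{N})^\perp$. Compactness of $\mathcal{N}$ upgrades this to a global diffeomorphism onto a uniform $\delta$-tubular neighborhood $\mathcal{N}_\delta$, whose smooth inverse sends $q\in\mathcal{N}_\delta$ to its unique nearest point $\Pi_{\mathcal{N}}(q)\in\mathcal{N}$, with the orthogonality relation $q - \Pi_{\mathcal{N}}(q)\in (T_{\Pi_{\mathcal{N}}(q)}\mathcal{N})^\perp$.

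Next, for $v\in T_p\mathcal{N}$, I would pick a smooth curve $\gamma:(-\eps,\eps)\to\mathcal{N}$ with $\gamma(0)=p$ and $\gamma'(0)=v$. Because $\gamma(t)\in\mathcal{N}$, we have $\Pi_{\mathcal{N}}(\gamma(t)) = \gamma(t)$ identically, and the chain rule (together with the fact that $d\Pi_{\mathcal{N}}|_p$ is a well-defined linear map, so $P_{\mathcal{N}}(p)(v)$ may be computed along any curve with initial velocity $v$) yields $P_{\mathcal{N}}(p)(v) = v$. For $v\in (T_p\mathcal{N})^\perp$, the segment $t\mapsto p+tv$ lies on the normal fiber at $p$, and the tubular neighborhood property forces $\Pi_{\mathcal{N}}(p+tv) = p$ for all sufficiently small $|t|$; differentiating at $t=0$ gives $P_{\mathcal{N}}(p)(v) = 0$.

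Combining these two computations with the orthogonal decomposition $\mathbb{R}^L = T_p\mathcal{N}\oplus (T_p\mathcal{N})^\perp$ shows that $P_{\mathcal{N}}(p)$ coincides with the orthogonal projection of $\mathbb{R}^L$ onto $T_p\mathcal{N}$. Idempotence $P_{\mathcal{N}}(p)^2 = P_{\mathcal{N}}(p)$ and self-adjointness $P_{\mathcal{N}}(p)^T = P_{\mathcal{N}}(p)$ are then immediate, as any orthogonal projection onto a linear subspace has these properties. I do not expect a genuine obstacle here: the only nontrivial ingredient is the smoothness of $\Pi_{\mathcal{N}}$ on $\mathcal{N}_\delta$, which is the classical tubular neighborhood theorem using compactness of $\mathcal{N}$, and the remaining work is differentiation of tautological identities.
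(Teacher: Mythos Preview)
Your argument is correct and in fact more complete than the paper's own proof. The paper takes the two displayed identities $P_{\mathcal N}(p)v=v$ for $v\in T_p\mathcal N$ and $P_{\mathcal N}(p)v=0$ for $v\in (T_p\mathcal N)^\perp$ as given, and then verifies symmetry by hand: it picks an orthonormal basis $o_1,\dots,o_l$ of $T_p\mathcal N$ and $o_{l+1},\dots,o_L$ of $(T_p\mathcal N)^\perp$ and checks $\langle Po_\alpha,o_\beta\rangle=\langle o_\alpha,Po_\beta\rangle$ in each of the three index ranges. You instead actually \emph{prove} the two identities (via a curve in $\mathcal N$ for the tangent case and the normal-fiber argument for the normal case), recognize $P_{\mathcal N}(p)$ as the orthogonal projection onto $T_p\mathcal N$, and then invoke the general fact that orthogonal projections are idempotent and self-adjoint. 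Your route is a bit more conceptual; the paper's is a direct basis computation. Either is perfectly adequate for this elementary lemma.
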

\begin{proof} Indeed, fix $p \in \mathcal{N}$, and let us write $P$ instead of $P_\mathcal{N}(p)$. Let $o_1,\ldots,o_l$ be an orthonormal basis of $T_p \mathcal{N}$ and ${o}_{l+1},\ldots,{o}_{L}$ be an orthonormal basis of 
$(T_p\mathcal{N})^\perp$.
Then, since $P{o}_\beta = P{o}_\alpha = 0$ for $\alpha,\beta\in\{l+1 \ldots  N\}$,
\[
 \langle P{o}_\alpha,{o}_\beta \rangle = 0 = \langle {o}_\alpha,P{o}_\beta \rangle  
 \quad \forall \alpha,\beta = l+1,\ldots,N.
\]
Also, since $P o_\alpha = o_\alpha$ for $\alpha = 1, \ldots, l$,
\[
 \langle Po_\alpha,{o}_\beta \rangle = \langle o_\alpha,{o}_\beta \rangle = 0 = \langle o_\alpha,P{o}_\beta \rangle \quad \forall \alpha=1,\ldots l;\, \beta = l+1,\ldots,N.
\]
Finally,
\[
 \langle Po_\alpha,o_\beta \rangle = \langle o_\alpha,o_\beta \rangle = \langle o_\alpha,Po_\beta \rangle\quad \forall \alpha,\beta=1,\ldots l.
\]
Finally let $v, w \in \R^n$ then we have $v = \sum_{\alpha = 1}^N \lambda_\alpha o_\alpha$, $w = \sum_{\beta = 1}^N \mu_\beta o_\beta$.
Consequently,
\[
 \langle Pv,w \rangle = \sum_{\alpha,\beta = 1}^N \lambda_\alpha \mu_\beta \langle Po_i, o_j \rangle = \sum_{\alpha,\beta = 1}^N \lambda_\alpha \mu_\beta \langle o_\alpha, Po_\beta \rangle = \langle Pv,w \rangle.
\]
Thus, $P$ is symmetric.
\end{proof}

Note that then for any $v,w \in \R^N$, since $P_\mathcal{N}^T = P_\mathcal{N}$ and $P_\mathcal{N}X_\alpha = X_\alpha$, it follows from (\ref{decom})
that

\[
\begin{split}
 w^T P_\mathcal{N}v  &= \sum_{\alpha = 1}^l \langle X_\alpha, P_\mathcal{N}v \rangle \langle Y_\alpha, w \rangle= \sum_{\alpha = 1}^l \langle P_\mathcal{N}X_\alpha, v \rangle \langle Y_\alpha, w \rangle\\
 &= \sum_{\alpha = 1}^l \langle X_\alpha, v \rangle \langle Y_\alpha, w \rangle.
\end{split}
\]
Thus,
\begin{equation}\label{Pdecomp}
\left (P_\mathcal{N}(p) \right )_{ij} = \sum_{\alpha = 1}^l X_\alpha^i(p) Y_\alpha^j(p) = \sum_{\alpha = 1}^l Y_\alpha^i(p) X_\alpha^j(p), \forall \ p\in\mathcal N,
\end{equation}
where the last equality holds because $P(p)_{ij} = P(p)_{ji}$.

With these preparations, we are ready to show Step 2 in the proof of Theorem 1.1 for
a Riemannian homogeneous manifold $\mathcal N$ that is equivariantly  embedded into   $\mathbb R^L$.  

Let $v^h$, $u^h$, $u$ be as constructed in Step 1, Section~\ref{sec:step1}.

It suffices to show that for any $0<T<\infty$,
\begin{equation}\label{eq:flow3}
\int_{\Omega_T} \langle\partial_t u,P_{\mathcal{N}}(u)(\varphi)\rangle+\int_0^TE_{s,p}'\big(u(\cdot, t), P_\mathcal{N}(u(\cdot,t))(\varphi(\cdot, t))\big)=0, 
\end{equation}
for all $\varphi\in L^\infty(0,T; W^{s,p}_0(\Omega, \mathbb R^L))\cap L^\infty(\Omega_T,\mathbb R^L)$.

To simplify the presentation, we set 
$$
\mathcal W_h(x,y,t)=\frac{|u^h(x,t)-u^h(y,t)|^{p-2} (u^h(x,t)-u^h(y,t))}{|x-y|^{n+sp}},
$$
and
$$
\mathcal W(x,y,t)=\frac{|u(x,t)-u(y,t)|^{p-2} (u(x,t)-u(y,t))}{|x-y|^{n+sp}}.
$$
From (\ref{discreteEL1}), we see that 
\begin{eqnarray*}
&&-\int_{\Omega_T} \langle \partial_t v^h(x, t), X_\alpha(u^h(x, t))\eta(x,t)\rangle\\
&& =\int_{\Omega^2_T} \langle \mathcal W_h(x,y,t), X_\alpha(u^h(x,t))\eta(x,t)-X_\alpha(u^h(y,t))\eta(y,t)\rangle
\end{eqnarray*}
holds for any $1\le \alpha\le l$ and $\eta\in L^\infty(0,T; W^{s,p}_0(\Omega,\mathbb R))\cap L^\infty(\Omega_T,\mathbb R)$.

From (\ref{killthekill}), we have
$$\langle \mathcal W_h(x,y,t), X_\alpha(u^h(x,t))\eta(x,t)\rangle=\langle \mathcal W_h(x,y,t), X_\alpha(u^h(y,t))\eta(x,t)\rangle,$$
so that 
\[\begin{split}
&\int_{\Omega^2} \langle \mathcal W_h(x,y,t), X_\alpha(u^h(x,t))\eta(x,t)-X_\alpha(u^h(y,t))\eta(y,t)\rangle\\
&=\int_{\Omega^2} \langle \mathcal W_h(x,y,t), X_\alpha(u^h(y,t))(\eta(x,t)-\eta(y,t))\rangle\\
&\xrightarrow{h\rightarrow 0}\int_{\Omega^2} \langle \mathcal W(x,y,t), X_\alpha(u(y,t))(\eta(x,t)-\eta(y,t))\rangle\\
&=\int_{\Omega^2} \langle \mathcal W(x,y,t), X_\alpha(u(x,t))\eta(x,t)-X_\alpha(u(y,t))\eta(y,t)\rangle,
\end{split}
\]
where we have used Lemma \ref{la:gagliardounderweakconv} and (\ref{killthekill}) in the last two steps. 

It is straightforward that
\[
\begin{split}
&\int_{\Omega_T} \langle \partial_t v^h(x, t), X_\alpha(u^h(x, t))\eta(x,t)\rangle\\
&\xrightarrow{h\rightarrow 0}\int_{\Omega_T} \langle \partial_t u(x, t), X_\alpha(u(x, t))\eta(x,t)\rangle.
\end{split}
\]
Equalling these two limits yields that 
\begin{eqnarray}\label{eq:flow3a}
&&\int_{\Omega_T} \langle \partial_t u(x, t), X_\alpha(u(x, t))\eta(x,t)\rangle+\nonumber\\
&&\int_{\Omega^2_T} \langle \mathcal W(x,y,t), X_\alpha(u(x,t))\eta(x,t)-X_\alpha(u(y,t))\eta(y,t)\rangle=0,
\end{eqnarray} 
for all $1\le\alpha\le l$ and $\eta\in L^\infty(0,T; W^{s,p}_0(\Omega,\mathbb R))\cap L^\infty(\Omega_T,\mathbb R)$.

Now for any $\varphi\in L^\infty(0,T; W^{s,p}_0(\Omega,\mathbb R^L))\cap L^\infty(\Omega_T,\mathbb R^L)$ 
and $1\le \alpha\le l$
substitute $\eta=\eta_\alpha\equiv\langle Y_\alpha(u),\varphi\rangle$ into (\ref{eq:flow3a}) and take summation of
the resulting equations over $1\le\alpha\le l$. Observe that in view of \eqref{Pdecomp}, 
\[
 \sum_{\alpha=1}^l X_\alpha(u(x,t))\, \eta_\alpha(x,t)= P(u(x,t))\varphi(x,t).
\]
Thus, we arrive at
\[\begin{split}
&\int_{\Omega_T} \langle \partial_t u(x, t), P(u(x, t))\varphi(x,t)\rangle\\
=&\int_{\Omega^2_T} \langle \mathcal W(x,y,t), P(u(x, t))\varphi(x,t)-P(u(y, t))\varphi(y,t)\rangle=0,
\end{split}
\]
for any $\varphi\in L^\infty(0,T; W^{s,p}_0(\Omega,\mathbb R^L))\cap L^\infty(\Omega_T,\mathbb R^L)$. This shows \eqref{eq:flow3} and thus completes the proof of Theorem 1.1 for a Riemannian homogeneous manifold $\mathcal N$. \qed

\bigskip
\bigskip

\bibliographystyle{alpha}
\bibliography{biblio}

\end{document}